\documentclass[10pt]{article}

\usepackage[T1]{fontenc}
\usepackage{amsmath}
\usepackage{amsthm}
\usepackage{amssymb}
\usepackage[square,comma,numbers,sort&compress]{natbib}

\usepackage{tikz}
\usetikzlibrary{positioning}
\tikzset{
    every path/.style={thick},
    every node/.style={minimum width=5pt, inner sep=1pt},
    hollow/.style={shape=circle, fill=white, node contents={}, outer sep=0pt, draw},
    letter/.style={text height=1.5ex, text depth=.5ex}
}
\newcommand{\arc}[2]{
    \draw (#1,0)
        .. controls ({#1},{0.5*(#2-#1)})
                    and ({#2},{0.5*(#2-#1)})
        .. (#2,0);
}

\usepackage[small, pagestyles]{titlesec}
\usepackage[small, it]{caption}
\usepackage{parskip}

\theoremstyle{plain}
\newtheorem{theorem}{Theorem}[section]
\newtheorem{proposition}[theorem]{Proposition}
\newtheorem{observation}[theorem]{Observation}

\renewenvironment{abstract}%
                {
                  \begin{list}{}%
                     {\setlength{\rightmargin}{1in}%
                      \setlength{\leftmargin}{1in}}%
                   \item[]\ignorespaces\begin{small}}%
                 {\end{small}\unskip\end{list}}

\usepackage[bookmarks]{hyperref}
\hypersetup{
	colorlinks=true,
	linkcolor=black,
	anchorcolor=black,
	citecolor=black,
	urlcolor=black,
	pdfpagemode=UseThumbs,
	pdftitle={Deciding lettericity is NP-complete, even for four-colorable comparability graphs},
	pdfsubject={Graph Theory},
	pdfauthor={Vatter},
}

\newpagestyle{main}[\small]{
        \headrule
        \sethead[\usepage][][]
        {\textsc{Deciding Lettericity Is NP-Complete}}{}{\usepage}}

\title{\textsc{Deciding Lettericity Is NP-Complete, Even for Four-Colorable Comparability Graphs}}
\author{%
	Vincent Vatter\\
	\small Department of Mathematics\\[-0.5ex]
	\small University of Florida\\[-0.5ex]
	\small Gainesville, Florida USA
}
\date{}

\begin{document}
\maketitle
\pagestyle{main}

\begin{abstract}
We prove that deciding whether a graph has lettericity at most $k$ is NP-complete, even for four-colorable comparability graphs. Our reduction maps a bipartite graph $G$ to the graph obtained from its incidence graph by inflating each vertex by a clique or an independent set of three vertices. The lettericity of this graph is determined by the numbers of vertices and edges of $G$ and the greatest number of edge-disjoint paths on four vertices in $G$. Teypaz and Rapine showed that deciding whether the edges of a bipartite graph can be partitioned into paths on four vertices is NP-complete.
\end{abstract}

\section{Introduction}
\label{sec:intro}

Lettericity was introduced by Petkov\v{s}ek~\cite{petkovsek:letter-graphs-a:} to study well-quasi-order. He showed that, for every~$k$, the graphs of lettericity at most $k$ are well-quasi-ordered under the induced subgraph order. Lettericity has been studied in several subsequent papers~\cite{ferguson:on-the-letteric:,alecu:critical-proper:,mandrick:bounds-on-the-l:,alecu:letter-graphs-a:abstract,alecu:letter-graphs-a:}, in part because of its connection to the geometric grid classes of permutations~\cite{albert:geometric-grid-:,albert:inflations-of-g:,albert:inflations-of-g:2x4:,bevan:growth-rates-of:geom,braunfeld:decidability-in:}. In some sense, hereditary properties of graphs with bounded lettericity are the graphical analogues of geometric grid classes~\cite{alecu:letter-graphs-a:iff}.

All graphs in this paper are finite and simple. For a finite alphabet $\Sigma$, a \emph{decoder} is a set $D \subseteq \Sigma^2$ of ordered pairs. For a word $w = w(1) w(2) \cdots w(n)$ over $\Sigma$, the \emph{letter graph} of~$w$ with respect to $D$ is the graph $\Gamma_D(w)$ with vertex set $\{1, 2, \dots, n\}$ in which, for $i < j$, the vertices $i$ and $j$ are adjacent precisely when $(w(i), w(j)) \in D$. A \emph{lettering} of a graph~$G$ consists of a word~$w$, a decoder $D$, and an isomorphism from~$G$ to $\Gamma_D(w)$. The \emph{lettericity} of~$G$, denoted by $\ell(G)$, is the least number of distinct letters in the word of a lettering of~$G$. If a lettering sends the vertex~$v$ of~$G$ to the vertex~$i$ of~$\Gamma_D(w)$, then the letter $w(i)$ \emph{encodes}~$v$; if it sends~$u$ to $i$ and~$v$ to $j$ with $i < j$, then~$u$ is \emph{encoded before}~$v$. The vertices encoded by a single letter $a$ form a clique if $(a,a) \in D$ and an independent set otherwise.

Alecu, Kant\'e, Lozin, and Zamaraev~\cite{alecu:lettericity-of-:} showed that deciding whether $\ell(G) \le k$ is fixed-parameter tractable when parameterized by $k$, and they conjectured that the problem is NP-complete when $k$ is part of the input. Grobler, Morawietz, and Sacher~\cite{grobler:towards-settlin:} studied related problems, which they call retrieval problems, in which a graph is given together with two of three objects (a word, a decoder, and an assignment of letters to the vertices), and the task is to find the third so that the graph has a lettering with this word and decoder in which each vertex is encoded by its assigned letter. They showed that the word and the decoder can be retrieved in polynomial time, while retrieving the assignment of letters is equivalent to the graph isomorphism problem. We prove the conjecture of Alecu, Kant\'e, Lozin, and Zamaraev, in fact for a restricted class of graphs.

\begin{theorem}
\label{thm:np-complete}
Given a graph~$G$ and an integer~$k$, deciding whether $\ell(G) \le k$ is NP-complete, even when~$G$ is a four-colorable comparability graph.
\end{theorem}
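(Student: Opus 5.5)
Membership in NP is immediate: a lettering with at most $k$ letters is a certificate consisting of a word of length $|V(G)|$, a decoder of size at most $k^2$, and a bijection, all checkable in polynomial time. For hardness we reduce from the problem shown NP-complete by Teypaz and Rapine: given a bipartite graph $H$ (we rename the input graph of the reduction $H$ to avoid clashing with the $G$ of Theorem~\ref{thm:np-complete}), decide whether $E(H)$ can be partitioned into paths on four vertices ($P_4$'s). From $H$ we build the incidence graph $I(H)$. Its vertices are $V(H)\cup E(H)$, with a vertex adjacent to each edge containing it. We then inflate each vertex of $I(H)$ by a module of three vertices, using a clique for one side and an independent set for the other. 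The choice of sides is made so that the result is a comparability graph, and so that the bipartition of $H$ together with the incidence structure yields a proper $4$-coloring. Call the result $G$. It is a comparability graph because $I(H)$ is bipartite, and a transitive orientation of a bipartite graph extends through clique and independent-set inflations by orienting each module internally by a linear order. It is $4$-colorable by coloring the two classes of $V(H)$ and the edge-vertices suitably.

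The core claim is a formula of the form
\[
\ell(G) = \alpha |V(H)| + \beta |E(H)| - \gamma\, p(H),
\]
where $p(H)$ is the maximum number of edge-disjoint $P_4$'s in $H$ and $\alpha,\beta,\gamma$ are fixed constants. Then $E(H)$ partitions into $P_4$'s iff $p(H)=|E(H)|/3$, iff $\ell(G)\le \alpha|V(H)|+\beta|E(H)|-\gamma|E(H)|/3$. This completes the reduction.

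\textbf{Upper bound.} The upper bound is an explicit lettering. Each inflated module needs essentially its own letters, since three twins behave like a single vertex whose neighborhood is distinguishable from everything else. A letter can be shared between two modules only when the word can interleave them consistently. The plan is to show that along a $P_4$ $u\,e\,v\,f\,w$, where $e=uv$ and $f=vw$ are edges, the corresponding chain in $I(H)$ can be encoded so that consecutive modules reuse letters, saving a fixed amount $\gamma$ per $P_4$. Edge-disjointness of the $P_4$'s ensures that these savings never conflict at an edge-vertex. Shared vertex-modules must be handled carefully, and the inflation size $3$ is presumably chosen exactly so that sharing at a vertex of $H$ cannot be exploited twice.

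\textbf{Lower bound.} This is the main obstacle. Given an optimal lettering of $G$, we must extract $\ell$-many edge-disjoint $P_4$'s from the letters that encode vertices of several modules. The approach is to first prove structural lemmas. One lemma says that a letter whose vertices lie in different modules has strong restrictions: two modules sharing a letter must have nested or complementary neighborhoods in a specific way, forcing them to be adjacent or at distance two in $I(H)$, with the word order constrained. Another says that no letter meets more than a bounded number of modules, and no module is covered by fewer than some number of letters. A counting (charging) argument then assigns each saved letter to a local configuration. We would show that these configurations correspond to edge-disjoint $P_4$'s in $H$, giving $\ell(G)\ge \alpha|V|+\beta|E|-\gamma\,p(H)$. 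The delicate part is ruling out savings from other configurations, such as stars or paths of other lengths. This is where the size three of the modules and the clique/independent choice must be used, likely via a pigeonhole argument: a letter shared across three modules would force an induced pattern absent from $G$.
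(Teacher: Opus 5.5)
Your NP-membership argument and your comparability and $4$-coloring checks are fine, and your skeleton matches the paper's: reduce from Teypaz--Rapine, inflate $I(H)$ by triples, and express $\ell$ through $p(H)$. But the reduction rests entirely on the exact formula, which you neither pin down nor prove, and several concrete claims in your sketch would fail.

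First, the clique/independent choice is not forced by comparability or $4$-colorability; both choices give $4$-colorable comparability graphs. Yet the choice is decisive: vertex-vertices must become cliques and edge-vertices independent sets. The tool you are missing is the observation that, along the occurrences of one letter, every other vertex switches adjacency at most once. In particular, a vertex adjacent to exactly one of two same-letter vertices is encoded between them. Take three clique twins $v_1,v_2,v_3$ encoded in this order, and let $a$ be the letter of $v_2$. Then either $a$ encodes $v_2$ alone or $(a,a)\in D$. Otherwise $v_1$ and $v_3$ would both have to lie between $v_2$ and another vertex with letter $a$, which is impossible; this is why the triples have size three. So each triple can be re-encoded by copies of $a$ at the position of $v_2$ without new letters, and symmetrically for independent triples. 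Since vertex-vertices are pairwise nonadjacent and edge-modules are independent, every vertex-module then has a private letter, accounting for exactly $n=|V(H)|$ letters. With the opposite choice, every edge-module would need a private letter, which removes precisely the sharing that detects $P_4$'s.

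Second, you misidentify where the savings come from. They do not come from consecutive modules along $u\,e\,v\,f\,w$; that path has two edges, so it is a $P_3$ of $H$, not a $P_4$. It is also false that ``each inflated module needs essentially its own letters''. Edge-modules share letters: any two edges of the bipartite $H$ can share one, and three can exactly when they form a $P_4$. Your proposed lemma that modules sharing a letter are at distance at most two in $I(H)$ is false as well. The end edges of a $P_4$, or two vertex-disjoint edges, lie at distance at least four and can share a letter, and this is exactly what the upper bound exploits.

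Concretely, the missing arguments are these. For the upper bound, fix a bipartition $X\cup Y$ of $H$. Take $p(H)$ edge-disjoint $P_4$'s as triples, pair up the remaining edges, and orient each $P_4$ from its $X$-end to its $Y$-end. Then use a word $w_1\,w_X\,w_2\,w_Y\,w_3$, with first, middle and last edge-vertices placed around the vertex-vertices of $X$ and of $Y$. For a triple on the path $v_1v_2v_3v_4$ with $v_1,v_3\in X$, the letter $a$ of its edge-vertices gets the decoder pairs $(a,b_1),(a,b_2),(b_3,a),(b_4,a)$, where $b_i$ is the private letter of $\hat v_i$, and pairs are handled likewise. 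This, not comparability, is where bipartiteness of $H$ is used.

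For the lower bound, suppose $\hat e_1,\dots,\hat e_r$ share a letter in this order. For each $i\ge 2$ pick an endpoint $v_i$ of $e_i$ not on $e_{i-1}$. The observation puts $\hat v_i$ between $\hat e_{i-1}$ and $\hat e_i$ and makes it adjacent to every later $\hat e_j$. So $\hat v_2,\dots,\hat v_r$ are distinct neighbours of $\hat e_r$, and $r\le 3$. Running the argument backwards shows that $r=3$ forces a $P_4$. Counting then gives $m\le 2g+p(H)$, where $g$ is the number of letters encoding edge-vertices.

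The resulting formula is $\ell=n+\lceil (m-p(H))/2\rceil$, which has a ceiling and so is not literally of your linear form. The threshold is $n+\lfloor m/3\rfloor$: $\ell\le n+\lfloor m/3\rfloor$ holds if and only if $m=3\lfloor m/3\rfloor$ and $p(H)=\lfloor m/3\rfloor$, that is, exactly when $H$ has a $P_4$-decomposition.
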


While completing this paper, we learned that Fernau, German, and Mann~\cite{fernau:lettericity-is-:} have independently and concurrently proved that this problem is NP-complete for general graphs, by a reduction from monotone not-all-equal 3-SAT. They also show that, under the Exponential Time Hypothesis, the problem cannot be decided in time $2^{o(n)}$ on graphs with $n$ vertices, and they settle the complexity of two extension problems posed by Grobler, Morawietz, and Sacher, in which only the word or only the assignment of letters is given. Our approach is different. We prove an exact formula (Theorem~\ref{thm:formula}) for the lettericity of the graphs our reduction produces, in terms of the greatest number of edge-disjoint paths on four vertices in a bipartite graph. NP-completeness follows from this formula in a few lines, and because these graphs are four-colorable comparability graphs, it holds even within that class.

We write~$P_4$ for the path on four vertices. A \emph{$P_4$-decomposition} of a graph~$G$ is a set of edge-disjoint copies of~$P_4$ in~$G$ that together cover every edge of~$G$; here and throughout, copies of~$P_4$ need not be induced, and distinct copies may share vertices. Teypaz and Rapine~\cite[Theorem~4.3]{teypaz:graph-decomposi:} proved that deciding whether a bipartite graph has a~$P_4$-decomposition is NP-complete, and we reduce from this problem. Their proof is itself a reduction from exact cover by 3-sets (X3C), a classic NP-complete problem~\cite[Section~3.1.2]{garey:computers-and-i:}: given a set~$X$ and a collection~$\mathcal{S}$ of $3$-element subsets of~$X$, the problem is to decide whether some subcollection of~$\mathcal{S}$ partitions~$X$. 

The key quantity in our reduction is the greatest number of pairwise edge-disjoint copies of~$P_4$ in a graph~$G$, which we denote by $\nu(G)$. It is related to lettericity through the incidence graph of~$G$: in a lettering of the incidence graph $I(G)$, a single letter can encode at most three edge-vertices, with equality only if the corresponding edges of~$G$ form a copy of~$P_4$. Consequently, if~$G$ has~$m$ edges, then these vertices are encoded by at least $\lceil (m - \nu(G))/2 \rceil$ letters (Proposition~\ref{pro:lower}). When~$G$ is bipartite, this many letters suffice: given $\nu(G)$ edge-disjoint copies of~$P_4$ in~$G$, we can encode the vertices corresponding to the three edges of each copy by a single letter, and those corresponding to the remaining edges two to a letter (Proposition~\ref{pro:upper}).

The reduction maps a bipartite graph~$G$ to a graph~$J(G)$ obtained from the incidence graph of~$G$ by inflating each vertex corresponding to a vertex of~$G$ by a clique of three vertices and each vertex corresponding to an edge of~$G$ by an independent set of three vertices; two new vertices replacing different vertices of the incidence graph are adjacent precisely when the vertices they replace are adjacent. If~$G$ has~$n$ vertices and~$m$ edges, then~$J(G)$ has $3n + 3m$ vertices and $3n + 18m$ edges.

\begin{theorem}
\label{thm:formula}
For every bipartite graph~$G$ with~$n$ vertices and~$m$ edges,
\[
	\ell(J(G)) = n + \left\lceil \frac{m - \nu(G)}{2} \right\rceil.
\]
\end{theorem}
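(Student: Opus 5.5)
We prove the two inequalities separately, guided by Propositions~\ref{pro:lower} and~\ref{pro:upper} for the incidence graph $I(G)$. The role of the inflation is to force every vertex of $G$ to consume a letter of its own, while keeping the edge-vertices behaving as in $I(G)$.

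\emph{Upper bound.} Start from the lettering of $I(G)$ given by Proposition~\ref{pro:upper}. Since $G$ is bipartite, we may assume each vertex of $G$ receives its own letter, and the edge-vertices use $\lceil (m-\nu(G))/2\rceil$ further letters. Each letter encoding a vertex $v$ of $G$ is made a clique letter $(a,a)\in D$, and the single occurrence is replaced by three consecutive copies, giving the clique of three. Each occurrence of an edge-letter $b$ is replaced by $bbb$, keeping $(b,b)\notin D$, which gives the independent triple. Consecutive copies have the same relative position to all other letters, so adjacency between different blobs is inherited from $I(G)$. If the construction of Proposition~\ref{pro:upper} does not literally give each vertex of $G$ a private letter, I would first re-derive it in that form, since that is the natural bipartite construction: vertex letters for the two sides, with a copy of $P_4$ $xy,yz,zw$ encoded as three occurrences of one letter interleaved among the vertex occurrences. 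Either way this yields $\ell(J(G))\le n+\lceil (m-\nu(G))/2\rceil$.

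\emph{Lower bound.} Take an optimal lettering of $J(G)$. First show that no letter encoding a vertex of a clique triple $K_v$ can encode any vertex outside $K_v$. Such a letter must be a clique letter, so every vertex it encodes is adjacent to everything else it encodes. Vertices in distinct cliques $K_u,K_v$ are nonadjacent, since $u$ and $v$ are never adjacent in $I(G)$. Vertices of edge-triples are pairwise independent within a triple, and a clique vertex is adjacent only to the edge-triples of incident edges. So a clique letter could cover at most $K_v$ together with a single edge-vertex $x$ of an incident edge $e$. That edge-vertex is a true twin of the others in its triple with respect to the rest of the graph. I would either exclude this with a twin argument, or simply account for it: it makes a letter "mixed", and a counting argument shows mixed letters never help. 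So at least $n$ letters are confined to vertex-cliques.

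Next pick one representative from each edge-triple and consider the induced lettering of the resulting subgraph, which contains a copy of $I(G)$ (one vertex per clique, one per edge-triple). Proposition~\ref{pro:lower} then gives that the edge-vertices use at least $\lceil (m-\nu(G))/2\rceil$ letters. These must be disjoint from the $n$ vertex letters, apart from the mixed case handled above, which completes the lower bound.

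\emph{Main obstacle.} The hard step is this separation of letters in the lower bound: showing that letters used on the cliques cannot simultaneously do useful work for the edge-vertices. The triples are there precisely to defeat economy tricks. A letter for one edge-vertex has three identical copies available, so restricting to a representative still leaves the $\lceil\cdot\rceil$ bound intact. A single clique letter can absorb at most one extra edge-vertex, and I would argue that this case forces an independent letter of comparable cost elsewhere. I would first try choosing, within each edge-triple, the representative not encoded by a clique letter. This is possible because a clique letter containing an edge-vertex from a triple contains at most one vertex of that triple, while three vertices are available. With that choice the edge-vertices of the chosen copy of $I(G)$ all carry non-clique letters, and the lower bound follows directly.
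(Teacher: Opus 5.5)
Your upper bound is the paper's argument. Proposition~\ref{pro:upper} already gives every vertex-vertex a private letter, and tripling each occurrence, with $(a,a)\in D$ exactly for the vertex letters, gives a lettering of $J(G)$.

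The lower bound has a genuine gap at its first step. You assert that a letter encoding a vertex of a clique triple $K_v$ must be a clique letter. That follows only if it encodes at least two vertices of $K_v$. In an arbitrary lettering the three vertices of $K_v$ may carry three different letters. A letter carrying just one of them can have $(a,a)\notin D$ and be shared with anything nonadjacent to that vertex, such as vertices of other cliques $K_u$, or of edge triples $T_e$ with $v\notin e$. This really happens. In $J(K_2)$, take cliques $\{x,x',x''\}$ and $\{y,y',y''\}$ and edge triple $T$. The word $ddd\,a\,bb\,cc\,a$ encodes $T$, then $x$, then $x',x''$, then $y',y''$, then $y$. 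With decoder $\{(d,a),(d,b),(d,c),(a,b),(c,a),(b,b),(c,c)\}$ this is a lettering in which the independent letter $a$ encodes one vertex of each clique. Your argument never uses optimality, so it cannot exclude such letters. Hence neither ``at least $n$ letters are confined to vertex-cliques'' nor their disjointness from the edge letters is established, and the counting argument for mixed letters is never supplied.

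The representative-choice fallback does not repair this. Nothing you say prevents several different clique letters from each taking a different vertex of the same $T_e$, so having three vertices available does not guarantee one off the clique letters. Even when every edge representative carries a non-clique letter, the clique representatives may carry non-clique letters shared with one another or with edge representatives. In the example above, choosing $x$, $y$ and one vertex of $T$ yields a lettering of $I(K_2)$ with only two letters, below the target $3$. Proposition~\ref{pro:lower} bounds only the edge letters and does not give you the additional $n$.

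The missing ingredient is a normalization done before any counting, which the paper supplies as Proposition~\ref{pro:twins}. A twin set $T$ of at least three vertices can be re-encoded by a single letter, with no new letters and with every vertex outside $T$ keeping its letter. To see this, take members $v_1,v_2,v_3$ of $T$ in encoding order and let $a$ encode $v_2$. Suppose $a$ encoded some $u\ne v_2$ while $(a,a)$ had the wrong status for $T$. Then $u\notin T$, so $v_1$ and $v_3$ would each be adjacent to exactly one of $v_2$ and $u$. Observation~\ref{obs:one-switch} would then place both between $v_2$ and $u$, which is impossible since they lie on opposite sides of $v_2$. So either $(a,a)$ already has the right status or $a$ encodes only $v_2$ and the pair can be set freely. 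Then replace the occurrence encoding $v_2$ by $|T|$ consecutive copies of $a$ and delete the other occurrences encoding $T$.

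Applying this to the $n+m$ disjoint triples in turn, each $K_v$ is encoded by a single letter with $(a,a)\in D$ and each $T_e$ by a single letter with $(a,a)\notin D$. Distinct cliques are nonadjacent, so they receive distinct letters, and no clique letter can encode an edge triple; thus every clique letter is private. One vertex from each triple then gives a lettering of $I(G)$ in which every vertex-vertex has a private letter. Proposition~\ref{pro:lower} supplies the remaining $\lceil (m-\nu(G))/2\rceil$ letters. This is exactly where the triples having size three is needed.
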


Applying Theorem~\ref{thm:formula} to the disjoint union $G \uplus G$, which is again bipartite, removes the ceiling. Because~$P_4$ is connected, every copy of~$P_4$ in $G \uplus G$ lies in one of the two copies of~$G$, so $\nu(G \uplus G) = 2\nu(G)$, and Theorem~\ref{thm:formula} gives
\[
	\ell(J(G \uplus G)) = 2n + m - \nu(G)
\]
for every bipartite graph~$G$ with~$n$ vertices and~$m$ edges. Thus, given~$n$ and~$m$,~$\ell(J(G \uplus G))$ and~$\nu(G)$ determine each other.

Section~\ref{sec:incidence} proves the facts about letterings of incidence graphs described above. In Section~\ref{sec:formula}, we prove Theorem~\ref{thm:formula} and then show that Theorem~\ref{thm:np-complete} follows quickly from it.

\section{Incidence graphs}
\label{sec:incidence}

We use the following fact about letterings throughout our arguments.

\begin{observation}
\label{obs:one-switch}
In a lettering of a graph, suppose that the vertices $x_1, x_2, \dots, x_r$ are encoded by the same letter, in this order, and that $z$ is a vertex other than these. Then $z$ is adjacent either to all or to none of the $x_i$ encoded before it, and either to all or to none of the $x_i$ encoded after it.
\end{observation}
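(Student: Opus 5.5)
The plan is to prove Observation~\ref{obs:one-switch} straight from the definition of the letter graph $\Gamma_D(w)$. Fix the lettering. Let $a$ be the common letter encoding $x_1, \dots, x_r$, and let $b$ be the letter encoding $z$. We may take $b=a$: since $z$ is distinct from every $x_i$, it still occupies a different position in the word.

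First handle the $x_i$ encoded before $z$. For such an $x_i$, let $i<j$ be the positions of $x_i$ and $z$. By the definition of $\Gamma_D(w)$, $x_i$ and $z$ are adjacent exactly when $(a,b)\in D$. This condition involves only the two letters, not which $x_i$ we picked. So either $(a,b)\in D$ and $z$ is adjacent to all of these $x_i$, or $(a,b)\notin D$ and $z$ is adjacent to none of them.

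The $x_i$ encoded after $z$ are handled the same way. Now the position of $z$ precedes that of $x_i$, so adjacency holds exactly when $(b,a)\in D$. Again this is a single condition independent of $i$, which gives the all-or-none conclusion.

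The isomorphism in the lettering transfers adjacency from $\Gamma_D(w)$ back to the original graph, which completes the argument. There is no genuine obstacle here. The only point needing care is that the decoder consists of ordered pairs, so the "before" and "after" groups must be treated separately: $(a,b)$ and $(b,a)$ may differ in membership in $D$. That is exactly why $z$ may switch once, between being adjacent to the earlier $x_i$ and to the later ones.
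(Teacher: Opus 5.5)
Your proof is correct, and it is the direct check from the definition of $\Gamma_D(w)$ that the paper leaves implicit (the observation is stated there without proof): adjacency of $z$ to every earlier $x_i$ is governed by the single condition $(a,b)\in D$, and to every later $x_i$ by $(b,a)\in D$. One small wording fix: ``we may take $b=a$'' should say that $b$ is \emph{allowed} to equal $a$, not that you are assuming it.
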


We say that a vertex is \emph{encoded between} two others if it is encoded after one of them and before the other. In particular, Observation~\ref{obs:one-switch} shows that a vertex that is adjacent to exactly one of two vertices encoded by the same letter must be encoded between them. This special case is Proposition~2.1 of Ferguson and Vatter~\cite{ferguson:letter-graphs-a:}, and the general statement is implicit in the proof of Proposition~4 of Mandrick and Vatter~\cite{mandrick:bounds-on-the-l:}.

Let~$G$ be a graph with vertex set~$V$ and edge set~$E$. The \emph{incidence graph} of~$G$, denoted by~$I(G)$, is the graph with vertex set
\[
	\hat{V} \cup \hat{E} = \{\hat{v} : v \in V\} \cup \{\hat{e} : e \in E\}
\]
and edge set $\{\hat{v}\hat{e} : v \in e\}$. We refer to the vertices in $\hat{V}$ as \emph{vertex-vertices} and to those in $\hat{E}$ as \emph{edge-vertices}.

We say that a vertex has a \emph{private letter} in a lettering if the letter encoding it encodes no other vertex. We first construct, for bipartite~$G$, a lettering of~$I(G)$ in which every vertex-vertex has a private letter.

\begin{proposition}
\label{pro:upper}
For every bipartite graph~$G$ with~$m$ edges, the incidence graph~$I(G)$ has a lettering in which every vertex-vertex has a private letter and the edge-vertices are encoded by at most ${\lceil (m - \nu(G))/2 \rceil}$ letters.
\end{proposition}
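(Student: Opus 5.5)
We build the lettering explicitly from a maximum family of edge-disjoint copies of~$P_4$. Fix a bipartition $V = A \cup B$ of~$G$ and a set~$\mathcal{P}$ of $\nu(G)$ pairwise edge-disjoint copies of~$P_4$. Partition~$E$ into groups: each copy in~$\mathcal{P}$ forms a group of three edges, and the remaining $m - 3\nu(G)$ edges are paired up arbitrarily, with at most one edge left as a singleton group. The number of groups is then $\nu(G) + \lceil (m - 3\nu(G))/2 \rceil = \lceil (m-\nu(G))/2 \rceil$. Each group receives its own letter~$g$, with $(g,g) \notin D$ so that edge-vertices sharing a letter are independent. Each vertex-vertex receives a private letter, and no pair of vertex-letters lies in~$D$, since $\hat V$ is independent in~$I(G)$.

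The key step is choosing a single global word in which every vertex-vertex sits at a position compatible with every group simultaneously. We use the layered word
\[
	(\text{first edges})\,(\hat{A})\,(\text{second edges})\,(\hat{B})\,(\text{third edges}),
\]
where each block lists the relevant vertices in any order.
\begin{itemize}
\item A $P_4$ group $a_1b_1a_2b_2$ with $a_i \in A$ and $b_i \in B$ (possible because a path on four vertices in a bipartite graph has its two ends in opposite parts) places $\widehat{a_1b_1}$ first, $\widehat{b_1a_2}$ second and $\widehat{a_2b_2}$ third.
\item A pair $\{ab, a'b'\}$ places $\widehat{ab}$ in the first layer and $\widehat{a'b'}$ in the third layer.
\item A singleton goes in the first layer.
\end{itemize}
Then every $\hat a$ with $a\in A$ lies between the first and second layers, and every $\hat b$ with $b \in B$ lies between the second and third layers.

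We then set the decoder pairs between a vertex-letter~$v$ and a group letter~$g$. We put $(g,v) \in D$ exactly when $v$ is incident to all edges of the group encoded before~$\hat v$, and $(v,g)\in D$ exactly when $v$ is incident to all edges of the group encoded after~$\hat v$. By Observation~\ref{obs:one-switch}, this is correct precisely when, for every group, the set of edges of that group incident to~$v$ is a union of ``all before~$\hat v$'' and/or ``all after~$\hat v$''. We check this case by case.
\begin{itemize}
\item $P_4$ group: $a_1$ meets only the edge before it, and $a_2$ meets exactly the two edges after it. Likewise $b_1$ meets exactly the two edges before it, and $b_2$ meets only the edge after it. Every other vertex meets none of the three edges.
\item Pair: each of $a, a', b, b'$ lies between the two edges of the pair. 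So it meets the one before it, the one after it, both (if, say, $a=a'$), or neither, and all of these are admissible.
\item Singleton: the condition holds trivially.
\end{itemize}
Adjacencies inside a group are correctly absent, so $\Gamma_D(w) \cong I(G)$.

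The main obstacle is exactly this global consistency: a vertex-vertex has one position but must be correctly placed relative to all groups at once. Pairs are the delicate case, because placing both edges of a pair on the same side of a layer of vertex-vertices fails whenever their endpoints in the later layer differ. Splitting each pair across the outer layers, and orienting each $P_4$ so that its ends fall in~$A$ and~$B$ appropriately, resolves this.
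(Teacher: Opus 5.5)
Your proposal is correct and is essentially the paper's proof. It uses the same partition of the edges into the $\nu(G)$ copies of $P_4$, pairs, and at most one singleton, and the same layered word $w_1\, w_X\, w_2\, w_Y\, w_3$, with each $P_4$ read from its end in one part to its end in the other. Your uniform all-before/all-after rule for $D$ yields exactly the paper's pairs $(a,b_1)$, $(a,b_2)$, $(b_3,a)$, $(b_4,a)$, apart from some pairs that never apply to any two vertices (for the singleton group).
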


\begin{proof}
Let $X \cup Y$ be a bipartition of~$G$. We first partition the edges of~$G$ into parts, with one part for each letter that will encode edge-vertices. We then label each edge, within its part, as first, middle, or last. Fix $\nu(G)$ pairwise edge-disjoint copies of~$P_4$ in~$G$. The three edges of each of these copies form one part. Because~$G$ is bipartite, each of these copies has one endpoint in~$X$ and the other in~$Y$, and reading it from its endpoint in~$X$ to its endpoint in~$Y$, we label its edges first, middle, and last, in that order. We place the remaining $m - 3\nu(G)$ edges arbitrarily in parts of size two, together with one part of size one if parity requires. In a part of size two, we label one edge first and the other last, and we label the edge in a part of size one first. There are thus $\nu(G)$ parts of size three, $\lfloor (m - 3\nu(G))/2 \rfloor$ parts of size two, and at most one part of size one, for a total of
\[
	\nu(G) + \left\lceil \frac{m - 3\nu(G)}{2} \right\rceil = \left\lceil \frac{m - \nu(G)}{2} \right\rceil.
\]

We encode~$I(G)$ using a word of the form
\[
	w = w_1\, w_X\, w_2\, w_Y\, w_3,
\]
where $w_1$, $w_2$, and $w_3$ encode the edge-vertices $\hat{e}$ for which $e$ is a first, middle, or last edge, respectively, and $w_X$ and $w_Y$ encode the vertex-vertices $\hat{v}$ for which~$v$ lies in~$X$ or in $Y$, respectively. Within each of these five subwords, the vertices may be encoded in any order. Each vertex-vertex has a private letter, and the edge-vertices of each part share a letter, with different letters for different parts, so the edge-vertices are encoded by $\lceil (m - \nu(G))/2 \rceil$ letters, one for each part.

It remains to choose a decoder~$D$. Since the edge-vertices form an independent set, as do the vertex-vertices, the pairs in~$D$ all consist of one letter encoding edge-vertices and one letter encoding a vertex-vertex. We specify these pairs one part at a time, writing~$a$ for the letter that encodes the edge-vertices of the part.

Suppose first that the part has size three, consisting of the first, middle, and last edges $e_1 = v_1 v_2$, $e_2 = v_2 v_3$, and $e_3 = v_3 v_4$ of one of the fixed copies of~$P_4$, where $v_1, v_3 \in X$ and $v_2, v_4 \in Y$. The corresponding seven vertices of~$I(G)$ are encoded in~$w$ in the order
\[
	\hat{e}_1\, \hat{v}_1\, \hat{v}_3\, \hat{e}_2\, \hat{v}_2\, \hat{v}_4\, \hat{e}_3,
\]
up to the order of $\hat{v}_1$ and $\hat{v}_3$ and the order of $\hat{v}_2$ and $\hat{v}_4$. Writing $b_i$ for the letter encoding $\hat{v}_i$, the letters encoding these vertices are, in the same order,
\[
	a\, b_1\, b_3\, a\, b_2\, b_4\, a.
\]
We let $(a,b_1)$, $(a,b_2)$, $(b_3,a)$, and $(b_4,a)$ be the pairs of $D$ involving $a$. These four pairs give precisely the adjacencies of~$I(G)$ between the three edge-vertices $\hat{e}_i$ and the vertex-vertices, as shown in Figure~\ref{fig:P4-encoding}.

\begin{figure}[t]
\begin{footnotesize}
\begin{center}

\begin{tikzpicture}[scale=0.5, baseline=(current bounding box.center)]
	\node (v1) at (0,2) [hollow] {};
	\node (v2) at (2,2) [hollow] {};
	\node (v3) at (0,0) [hollow] {};
	\node (v4) at (2,0) [hollow] {};

	\node (e1) at (1,2) {};
	\node (e2) at (1,1) {};
	\node (e3) at (1,0) {};

	\draw (v1)--(v2)--(v3)--(v4);

	\node[left=1pt of v1] {$v_1$};
	\node[right=1pt of v2] {$v_2$};
	\node[left=1pt of v3] {$v_3$};
	\node[right=1pt of v4] {$v_4$};

	\node[above=1pt of e1] {$e_1$};
	\node[right=1pt of e2] {$e_2$};
	\node[below=1pt of e3] {$e_3$};
\end{tikzpicture}
\qquad\qquad
\begin{tikzpicture}[scale=0.5, baseline=(current bounding box.center)]
	\node (v1) at (0,2) [hollow] {};
	\node (v2) at (2,2) [hollow] {};
	\node (v3) at (0,0) [hollow] {};
	\node (v4) at (2,0) [hollow] {};

	\node (e1) at (1,2) [hollow] {};
	\node (e2) at (1,1) [hollow] {};
	\node (e3) at (1,0) [hollow] {};

	\draw (v1)--(e1)--(v2)--(e2)--(v3)--(e3)--(v4);

	\node[left=1pt of v1] {$\hat v_1$};
	\node[right=1pt of v2] {$\hat v_2$};
	\node[left=1pt of v3] {$\hat v_3$};
	\node[right=1pt of v4] {$\hat v_4$};

	\node[above=1pt of e1] {$\hat e_1$};
	\node[right=1pt of e2] {$\hat e_2$};
	\node[below=1pt of e3] {$\hat e_3$};
\end{tikzpicture}
\qquad\qquad
\begin{tikzpicture}[scale=0.5, baseline=(current bounding box.center)]
	\arc{1}{2}
	\arc{1}{5}
	\arc{3}{4}
	\arc{3}{7}
	\arc{4}{5}
	\arc{6}{7}

	\node (e1) at (1,0) [hollow] {};
	\node (v1) at (2,0) [hollow] {};
	\node (v3) at (3,0) [hollow] {};
	\node (e2) at (4,0) [hollow] {};
	\node (v2) at (5,0) [hollow] {};
	\node (v4) at (6,0) [hollow] {};
	\node (e3) at (7,0) [hollow] {};

	\node[below=2pt of e1] (le1) {$\hat e_1$};
	\node[below=2pt of v1] (lv1) {$\hat v_1$};
	\node[below=2pt of v3] (lv3) {$\hat v_3$};
	\node[below=2pt of e2] (le2) {$\hat e_2$};
	\node[below=2pt of v2] (lv2) {$\hat v_2$};
	\node[below=2pt of v4] (lv4) {$\hat v_4$};
	\node[below=2pt of e3] (le3) {$\hat e_3$};

	\node[below=1pt of le1, letter] {$a$};
	\node[below=1pt of lv1, letter] {$b_1$};
	\node[below=1pt of lv3, letter] {$b_3$};
	\node[below=1pt of le2, letter] {$a$};
	\node[below=1pt of lv2, letter] {$b_2$};
	\node[below=1pt of lv4, letter] {$b_4$};
	\node[below=1pt of le3, letter] {$a$};
	
\end{tikzpicture}

\end{center}
\end{footnotesize}
\caption{On the left, a copy of~$P_4$ in~$G$. In the center, the corresponding vertices of~$I(G)$, which form a copy of $P_7$. On the right, the same vertices as encoded.}
\label{fig:P4-encoding}
\end{figure}

Next suppose that the part has size two, with first edge $e_1 = v_1 v_2$ and last edge $e_2 = v_3 v_4$, where $v_1, v_3 \in X$ and $v_2, v_4 \in Y$; these edges may share an endpoint, in which case $v_1 = v_3$ or $v_2 = v_4$. The corresponding vertices of~$I(G)$ are encoded in~$w$ in the order
\[
	\hat{e}_1\, \hat{v}_1\, \hat{v}_3\, \hat{v}_2\, \hat{v}_4\, \hat{e}_2,
\]
up to the order of $\hat{v}_1$ and $\hat{v}_3$ and the order of $\hat{v}_2$ and $\hat{v}_4$, where a vertex-vertex listed twice is encoded only once. Writing $b_i$ for the letter encoding $\hat{v}_i$, the letters encoding these vertices are, in the same order,
\[
	a\, b_1\, b_3\, b_2\, b_4\, a,
\]
where, if $b_1 = b_3$ or $b_2 = b_4$, this letter appears only once. We again let $(a,b_1)$, $(a,b_2)$, $(b_3,a)$, and $(b_4,a)$ be the pairs of $D$ involving $a$. These four pairs give precisely the adjacencies of~$I(G)$ between the two edge-vertices $\hat{e}_i$ and the vertex-vertices.

Finally, suppose that the part has size one, consisting of the edge $e = v_1 v_2$. Then $\hat{e}$ is encoded before every vertex-vertex, and writing $b_i$ for the letter encoding $\hat{v}_i$, we let $(a,b_1)$ and $(a,b_2)$ be the pairs of $D$ involving $a$. These two pairs give precisely the adjacencies of~$I(G)$ between~$\hat{e}$ and the vertex-vertices.
\end{proof}

The next proposition shows that the edge-vertices of~$I(G)$ cannot be encoded by fewer than the $\lceil (m - \nu(G))/2 \rceil$ letters used in Proposition~\ref{pro:upper}, even when~$G$ is not bipartite and the vertex-vertices do not have private letters.

\begin{proposition}
\label{pro:lower}
Let~$G$ be a graph with~$m$ edges, and consider a lettering of~$I(G)$. Each letter encodes at most three edge-vertices, and if it encodes three, then the corresponding edges of~$G$ form a copy of~$P_4$. Consequently, the edge-vertices are encoded by at least $\lceil (m - \nu(G))/2 \rceil$ letters.
\end{proposition}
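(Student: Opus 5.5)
Fix a letter $a$ that encodes at least two edge-vertices, and list them in the order they are encoded, $\hat e_1, \hat e_2, \dots, \hat e_r$. The main tool is Observation~\ref{obs:one-switch}, applied with the vertex-vertices playing the role of $z$.

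\textbf{Step 1: where the endpoints sit.} Take a vertex-vertex $\hat v$ other than these $\hat e_i$. By Observation~\ref{obs:one-switch}, $\hat v$ is adjacent to all or none of the $\hat e_i$ encoded before it, and to all or none of those encoded after it. Since two distinct edges of $G$ share at most one endpoint, $\hat v$ is adjacent to at most two of the $\hat e_i$. So the $\hat e_i$ adjacent to $\hat v$ are limited to three patterns:
\begin{itemize}
\item[(i)] none of them;
\item[(ii)] a single edge-vertex $\hat e_i$, where every edge-vertex before $\hat v$ or every edge-vertex after $\hat v$ must be $\hat e_i$ alone;
\item[(iii)] two edge-vertices $\hat e_i,\hat e_j$, which must be exactly the block of $a$-vertices before $\hat v$, or exactly the block after it, or one from each side.
\end{itemize}

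\textbf{Step 2: bound $r \le 3$.} Suppose $r \ge 4$ and look at the edge $e_2$. Let $v$ be an endpoint of $e_2$, so $\hat v$ is adjacent to $\hat e_2$. Suppose $\hat v$ is encoded after $\hat e_2$. Then $\hat e_1$ and $\hat e_2$ both come before $\hat v$, so $\hat v$ is adjacent to all $a$-vertices before it. In particular $v \in e_1$, and $\hat v$ is encoded before $\hat e_3$. Now $v$ lies on $e_1$ and $e_2$, so $v \notin e_3, e_4$, so $\hat v$ is adjacent to none of the $a$-vertices after it. Hence the $a$-vertices before $\hat v$ are exactly $\hat e_1,\hat e_2$, and $v = e_1 \cap e_2$. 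Symmetrically, if $\hat v$ is encoded before $\hat e_2$, then $\hat v$ is adjacent to all $a$-vertices after it, namely $\hat e_2, \hat e_3, \hat e_4, \dots$. That makes $v$ a common endpoint of three distinct edges, all meeting at $v$. This is only excluded if the pattern also needs $\hat e_1$ excluded. To rule it out, I will take both endpoints $u,v$ of $e_2$ together with the analogous endpoints of $e_3$ and count: each endpoint of $e_2$ is either the shared vertex with $e_1$, or is adjacent to all later $\hat e_j$. If both endpoints are adjacent to all later $\hat e_j$, then $e_3$ equals $e_2$, a contradiction. So each of $e_2$ and $e_3$ has at least one endpoint forced to be adjacent to everything on one side. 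A case analysis, with $e_3$ mirrored via $\hat e_{r-1}$ from the other end, should give a contradiction for $r \ge 4$.

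This case bookkeeping is the step I expect to be the main obstacle. I would organize it cleanly by associating each endpoint of an edge with a position among the $a$-letters. Each vertex-vertex adjacent to some $\hat e_i$ is a "switch" between consecutive $a$-vertices, or lies at an extreme position. Then count: the $r$ edges need $2r$ endpoint incidences, but each $\hat v$ covers at most two, and only prefix or suffix pairs are possible.

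\textbf{Step 3: the case $r = 3$.} The same analysis should show that $e_1, e_2, e_3$ are distinct edges with $e_1$ meeting $e_2$, $e_2$ meeting $e_3$, and $e_1 \cap e_3 = \emptyset$. Any vertex shared by $e_1$ and $e_3$ but not $e_2$ would violate the prefix/suffix rule. Also no vertex lies on all three edges, by the bound in Step 1 since $\hat v$ meets at most two. So the three edges form a path $P_4$, not a triangle and not a star.

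\textbf{Step 4: the counting bound.} Letters encoding three edge-vertices give edge-disjoint copies of $P_4$, so there are at most $\nu(G)$ such letters. Call this number $t$, and let $s$ be the number of other letters that encode edge-vertices; each encodes at most two. Then $m \le 3t + 2s$. Hence
\[
s + t \ge \frac{m - t}{2} \ge \frac{m - \nu(G)}{2},
\]
and taking the ceiling gives the claimed bound.
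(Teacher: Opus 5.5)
\textbf{There is a genuine gap.} Your Step~1 rests on a false claim. That two distinct edges share at most one endpoint does \emph{not} imply that $\hat v$ is adjacent to at most two of the $\hat e_i$. It bounds the common neighbours of two edge-vertices; equivalently, each $\hat e$ has exactly two neighbours. But a vertex $v$ of degree $d$ in $G$ gives a vertex-vertex $\hat v$ with $d$ edge-vertex neighbours. Observation~\ref{obs:one-switch} by itself allows $\hat v$ to be adjacent to an arbitrarily long block of $a$-vertices before it or after it. Your own Step~2 runs into this when you find $v$ on $e_2,e_3,e_4$.

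So the pattern list (ii)--(iii) is not justified. The false bound is then used in Step~2 (``so $v\notin e_3,e_4$'', and that $\hat v$ precedes $\hat e_3$). It is also used in Step~3 to exclude three edges through a common vertex, which is precisely one of the configurations the proposition must rule out. Beyond this, Step~2 is left as an unfinished case analysis. The proposed count of ``$2r$ incidences, at most two per $\hat v$'' uses the same false bound, so $r\le 3$ is not established.

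The missing idea is to apply the bound of two on the edge-vertex side. For each $2\le i\le r$, pick an endpoint $v_i$ of $e_i$ not on $e_{i-1}$. Since $\hat v_i$ is adjacent to $\hat e_i$ but not to $\hat e_{i-1}$, Observation~\ref{obs:one-switch} puts it between them, adjacent to every $a$-vertex after it and to none before it. These $\hat v_2,\dots,\hat v_r$ lie in different gaps, hence are distinct. All of them are adjacent to $\hat e_r$, which has only two neighbours, so $r\le 3$.

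For $r=3$, take $u_i$ an endpoint of $e_i$ not on $e_{i+1}$; the same argument read backwards gives $e_1=u_1u_2$, while $e_3=v_2v_3$. As $\hat v_2,\hat v_3$ are adjacent to nothing encoded before them, $e_1\cap e_3=\emptyset$. Hence $u_2\ne v_2$ and $e_2=u_2v_2$, yielding the path $u_1u_2v_2v_3$. This also disposes of stars without any bound on how many $\hat e_i$ a single $\hat v$ can see. Your Step~4 counting is fine and matches the paper.
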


\begin{proof}
Suppose that the edge-vertices $\hat{e}_1, \hat{e}_2, \dots, \hat{e}_r$ are encoded by the same letter, in this order, where $r \ge 2$. For each $2 \le i \le r$, the edges $e_{i-1}$ and $e_i$ are distinct, so $e_i$ has an endpoint $v_i$ that is not an endpoint of $e_{i-1}$. The vertex-vertex $\hat{v}_i$ is then adjacent to $\hat{e}_i$ but not to $\hat{e}_{i-1}$, so by Observation~\ref{obs:one-switch}, it is encoded between them, and it is adjacent to every $\hat{e}_j$ encoded after it and to none of those encoded before it. It follows that the vertex-vertices $\hat{v}_2, \dots, \hat{v}_r$ are distinct (they are encoded in different places) and are all adjacent to $\hat{e}_r$. But $e_r$ has only two endpoints, so we must have $r \le 3$.

Now suppose that $r = 3$, so that $e_3 = v_2 v_3$. For $i = 1, 2$, let $u_i$ be an endpoint of $e_i$ that is not an endpoint of $e_{i+1}$. The same argument, read backwards, shows that $\hat{u}_1$ and $\hat{u}_2$ are distinct and are both adjacent to $\hat{e}_1$, so $e_1 = u_1 u_2$. Neither $v_2$ nor $v_3$ is an endpoint of $e_1$, because $\hat{v}_2$ and $\hat{v}_3$ are adjacent to none of the $\hat{e}_j$ encoded before them, so $e_1$ and $e_3$ share no endpoint. Consequently, $u_2$ and $v_2$ are distinct, and as both are endpoints of $e_2$, we have $e_2 = u_2 v_2$. Thus, $e_1$, $e_2$, and $e_3$ form the path $u_1 u_2 v_2 v_3$.

Finally, let $g$ be the number of letters that encode edge-vertices, and let $t$ be the number of these that encode three. The $t$ corresponding copies of~$P_4$ are edge-disjoint, since each edge-vertex is encoded by only one letter, so $t \le \nu(G)$. Counting the edge-vertices according to their letters gives
\[
	m \le 3t + 2(g - t) = 2g + t \le 2g + \nu(G),
\]
and since $g$ is an integer, it follows that $g \ge \lceil (m - \nu(G))/2 \rceil$.
\end{proof}

\section{An exact formula}
\label{sec:formula}

For a bipartite graph~$G$ with~$n$ vertices and~$m$ edges, Propositions~\ref{pro:upper} and~\ref{pro:lower} show that the least number of letters in a lettering of~$I(G)$ in which every vertex-vertex has a private letter is $n + \lceil (m - \nu(G))/2 \rceil$. However, a lettering of~$I(G)$ in which the vertex-vertices do not have private letters can use fewer letters. For example,~$I(K_2)$ is the path on three vertices, and its two vertex-vertices can share a letter, so $\ell(I(K_2)) = 2$, while $n + \lceil (m - \nu(K_2))/2 \rceil = 2 + 1 = 3$. To prevent such savings, we use twin sets.

A set~$T$ of vertices of a graph is a \emph{twin set} if~$T$ is a clique or an independent set and every vertex outside~$T$ is adjacent either to all of the vertices of~$T$ or to none of them. (In the language of modular decomposition, a twin set is a module that is a clique or an independent set.) By the following proposition, we may assume that a twin set of at least three vertices is encoded by a single letter. Alecu, Kant\'e, Lozin, and Zamaraev~\cite[Lemma~17]{alecu:lettericity-of-:} showed that deleting a vertex from a twin set of at least four vertices does not change the lettericity, and our argument is essentially theirs.

\begin{proposition}
\label{pro:twins}
Let~$T$ be a twin set of at least three vertices in a graph. Every lettering of the graph can be modified, without introducing new letters, so that the vertices of~$T$ are encoded by a single letter, while the vertices outside~$T$ keep their letters.
\end{proposition}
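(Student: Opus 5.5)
The plan is to delete the vertices of~$T$ from the word and reinsert them as a consecutive block of copies of one letter, placed at the position of a suitably chosen vertex of~$T$. Fix a lettering with word~$w$ and decoder~$D$. Suppose some $t \in T$ is encoded by a letter~$a$ such that $(a,a) \in D$ exactly when~$T$ is a clique. Then we remove every vertex of $T \setminus \{t\}$ from~$w$, and replace the occurrence of~$a$ encoding~$t$ by $|T|$ consecutive copies of~$a$, one for each vertex of~$T$.

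We then check that this is still a lettering of the same graph with the same decoder. Let~$z$ be a vertex outside~$T$. Each new copy of~$a$ lies on the same side of~$z$ as~$t$ did, so it is adjacent to~$z$ exactly when~$t$ was. Because~$T$ is a twin set, this is exactly the adjacency of~$z$ to every vertex of~$T$. Two copies within the block are adjacent exactly when $(a,a)\in D$, which by the choice of~$a$ matches whether~$T$ is a clique. Pairs of vertices outside~$T$ keep their letters and their relative order, so their adjacency does not change. No new letters are introduced.

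So the argument reduces to finding a good letter~$a$. There are three cases.
\begin{itemize}
\item If two vertices of~$T$ share a letter~$a$, their adjacency equals $[(a,a)\in D]$, so~$a$ is good.
\item If some vertex of~$T$ has a letter~$a$ that encodes no vertex outside~$T$, then after the deletion~$a$ encodes only the block. We may then redefine whether $(a,a)\in D$ freely without affecting any other adjacency, so~$a$ is good after this change.
\end{itemize}

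The remaining case is the main obstacle. Here the vertices of~$T$ have pairwise distinct letters, every one of these letters also encodes a vertex outside~$T$, and every one has the wrong loop status. This is where $|T|\ge 3$ must be used.

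My first attempt would take three vertices $x<y<z$ of~$T$, with letters $a,b,c$, and a vertex~$u\notin T$ encoded by the letter~$b$ of the middle vertex. I would apply Observation~\ref{obs:one-switch} to~$u$ and~$y$, and to~$x$ and~$z$, which are twins relative to~$u$ and~$y$. The goal is to show that~$u$ can be given the letter of~$x$ or of~$z$ without changing any adjacency. Repeating this, the outside vertices would be moved off one letter of~$T$, which reduces to the private-letter case above.

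If this re-lettering argument fails, I would instead follow the proof of \cite[Lemma~17]{alecu:lettericity-of-:}. That proof presumably handles this configuration using the three pairwise decoder relations among $a,b,c$ together with the fact that the vertices of~$T$ are twins.
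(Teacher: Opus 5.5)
\textbf{There is a genuine gap.} Your block-replacement step and its verification are correct, and they are the same modification the paper uses. Your first two cases are also fine. The gap is your third case, the only one that needs a real argument. There you propose to re-letter the outside vertex $u$ with the letter of $x$ or $z$, but you give no reason this preserves adjacencies. The fallback to what \cite[Lemma~17]{alecu:lettericity-of-:} ``presumably'' does is not a proof.

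\textbf{Missing idea.} The configuration in your third case is impossible, so no re-lettering is needed. Take $x<y<z$ in $T$, and $u\notin T$ encoded by the same letter $b$ as $y$. Suppose $T$ is a clique and $(b,b)\notin D$. Then $u$ and $y$ are nonadjacent, so by the twin property $u$ is adjacent to no vertex of $T$. Hence $x$ and $z$ are each adjacent to exactly one of the two same-letter vertices $y$ and $u$. By Observation~\ref{obs:one-switch}, both must be encoded between $y$ and $u$, that is, on the same side of $y$ as $u$. This contradicts $x$ being encoded before $y$ and $z$ after it. The independent-set case is symmetric: $(b,b)\in D$ forces $u$ to be adjacent to all of $T$, and the same contradiction follows. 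You had already set up exactly this configuration; you were aiming at the wrong conclusion.

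\textbf{Simplification.} This also makes your case split unnecessary. As in the paper, always take $t$ to be the middle of three vertices of $T$, with letter $a$. If $a$ encodes only $t$, you may set the status of $(a,a)$ freely. If $a$ encodes some other vertex $u\in T$, then $(a,a)$ is correct automatically. If $a$ encodes some $u\notin T$, the argument above shows $(a,a)$ already has the right status. This is precisely where $|T|\ge 3$ enters: you need vertices of $T$ encoded on both sides of $t$.
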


\begin{proof}
Fix a lettering of the graph, with word~$w$ and decoder~$D$. Choose three vertices $v_1$, $v_2$, and~$v_3$ of~$T$, encoded in this order, and let~$a$ be the letter encoding~$v_2$. We give the proof when~$T$ is a clique; a symmetric argument gives the case where~$T$ is an independent set.

We first show that we may assume that $(a,a) \in D$. If~$a$ encodes only~$v_2$, we may simply add $(a,a)$ to~$D$. Suppose instead that~$a$ also encodes a vertex $u \ne v_2$, and suppose, for the sake of contradiction, that $(a,a) \notin D$. Then~$u$ and~$v_2$ are nonadjacent, so $u \notin T$, and thus~$u$ is nonadjacent to every vertex of~$T$. Hence~$v_1$ and~$v_3$ are each adjacent to exactly one of~$v_2$ and~$u$, so by Observation~\ref{obs:one-switch}, both are encoded between~$v_2$ and~$u$. However, this is impossible, because~$v_1$ is encoded before~$v_2$ and~$v_3$ after it.

With $(a,a) \in D$, we modify the lettering by first deleting the occurrences encoding the vertices of $T \setminus \{v_2\}$ and then replacing the occurrence of~$a$ encoding~$v_2$ by $|T|$ consecutive copies of~$a$, which encode the vertices of~$T$. The vertices outside~$T$ keep their letters and are encoded in the same order as before, so the adjacencies among them are unchanged. The vertices of~$T$ still form a clique because $(a,a) \in D$, and each vertex outside~$T$ is encoded on the same side of all of them as it was of~$v_2$, so, as~$T$ is a twin set, its adjacencies to~$T$ are also unchanged. The result is therefore again a lettering of the graph, and it uses no letters that the original did not.
\end{proof}

We now inflate each vertex-vertex of~$I(G)$ by a clique of three vertices and each edge-vertex by an independent set of three vertices, obtaining the graph~$J(G)$. These $n + m$ sets of three vertices form pairwise disjoint twin sets, so by applying Proposition~\ref{pro:twins} to each of them in turn, we may assume that each is encoded by a single letter, and we consider only such letterings of~$J(G)$ in what follows.

Note that the twin sets replacing vertex-vertices are cliques, but vertex-vertices are not adjacent to each other in~$I(G)$, so no two vertex-vertices can share a letter. Moreover, the twin sets replacing edge-vertices are independent sets, so no vertex-vertex can share a letter with an edge-vertex. Thus, choosing one vertex from each twin set turns such a lettering of~$J(G)$ into a lettering of~$I(G)$, with no additional letters, in which every vertex-vertex has a private letter.

\begin{proof}[Proof of Theorem~\ref{thm:formula}]
By the discussion above, a lettering of~$J(G)$ yields a lettering of~$I(G)$ with no additional letters in which every vertex-vertex has a private letter, so $\ell(J(G)) \ge n + \lceil (m - \nu(G))/2 \rceil$. Conversely, Proposition~\ref{pro:upper} provides a lettering of~$I(G)$ with at most $n + \lceil (m - \nu(G))/2 \rceil$ letters in which every vertex-vertex has a private letter. We may assume that $(a,a)$ lies in its decoder precisely when~$a$ encodes a vertex-vertex, because the edge-vertices are pairwise nonadjacent and each vertex-vertex has a private letter. Replacing each occurrence of a letter in the word by three consecutive copies of it, and letting these copies encode the three vertices of~$J(G)$ replacing the vertex encoded by that occurrence, gives a lettering of~$J(G)$ with no additional letters, so $\ell(J(G)) \le n + \lceil (m - \nu(G))/2 \rceil$.
\end{proof}

We now prove Theorem~\ref{thm:np-complete}, using Theorem~\ref{thm:formula} for NP-hardness. For the restriction to four-colorable comparability graphs, recall that a comparability graph is a graph whose edges are the comparable pairs of some partial order on its vertices.

\begin{proof}[Proof of Theorem~\ref{thm:np-complete}]
The problem lies in NP because a lettering of the input graph with at most~$k$ letters has polynomial size once unused letters are discarded, and it can be checked by examining every pair of vertices. For NP-hardness, we reduce from the problem of deciding whether a bipartite graph has a~$P_4$-decomposition. Let~$G$ be a bipartite graph with~$n$ vertices and~$m$ edges, and write $q = \lfloor m/3 \rfloor$. We show that~$G$ has a~$P_4$-decomposition precisely when $\ell(J(G)) \le n + q$; since~$J(G)$ has~${3n + 3m}$ vertices and can be constructed in polynomial time, this gives a polynomial-time reduction. Because each copy of~$P_4$ has three edges, $\nu(G) \le q$, and because a~$P_4$-decomposition is a set of edge-disjoint copies of~$P_4$ covering all $m$ edges, $G$ has one precisely when $3\nu(G) = m$, that is, when $m = 3q$ and $\nu(G) = q$. On the other hand, Theorem~\ref{thm:formula} shows that $\ell(J(G)) \le n + q$ precisely when $\lceil (m - \nu(G))/2 \rceil \le q$, and since $q$ is an integer, this holds precisely when $m - \nu(G) \le 2q$. As~${m \ge 3q}$ and $\nu(G) \le q$, this inequality also holds precisely when $m = 3q$ and $\nu(G) = q$.

Finally, we show that each graph~$J(G)$ is a four-colorable comparability graph, so that the reduction stays within this class. Order the three vertices replacing each vertex-vertex~$\hat{v}$ as a chain, and place all of them below every vertex replacing an edge-vertex~$\hat{e}$ with $v \in e$. This relation is transitive: a vertex lies below a vertex of a chain only if it belongs to the same chain, all vertices of a chain lie below the same vertices replacing edge-vertices, and no vertex lies above a vertex replacing an edge-vertex. Its comparable pairs are precisely the edges of~$J(G)$. Giving the three vertices of each chain the colors $1$, $2$, and~$3$ and every vertex replacing an edge-vertex the color~$4$ yields a proper coloring of~$J(G)$.
\end{proof}

\subsection*{Tool use disclosure}

This work was carried out using ChatGPT Astra 6 Pro and Claude Opus 5.5. An initial proof of NP-completeness was produced by ChatGPT in response to the author's request. The author subsequently worked with both models to substantially simplify the argument, and to extract the $P_4$-counting formula. The manuscript was written primarily by the author, with assistance from the models in checking and copyediting. The author takes full responsibility for the results.

\setlength{\bibsep}{4pt}

%
%
%


\def\cprime{$'$}

\end{document}